\newtheorem{theorem}{Theorem}
\newcommand{\be}{\begin{equation}}
\newcommand{\ee}{\end{equation}}
\def\bee#1\eee{\begin{align}#1\end{align}}
\newcommand{\bse}{\begin{subequations}}
\newcommand{\ese}{\end{subequations}}
\newcommand{\nnb}{\nonumber}
\journal{Journal of Combinatorial Theory, Series A}
\begin{document}

\begin{frontmatter}

\title{Convex Set of Doubly Substochastic Matrices}

%%% Group authors per affiliation:
%\author{Lei Deng\fnref{myfootnote}}
%\address{Radarweg 29, Amsterdam}
%\fntext[myfootnote]{Since 1880.}
%
%%% Group authors per affiliation:
%\author{Qiulin Lin\fnref{myfootnote}}
%\address{Radarweg 29, Amsterdam}
%\fntext[myfootnote]{Since 1880.}

%% or include affiliations in footnotes:
\author[mymainaddress]{Lei Deng\corref{mycorrespondingauthor}}
\cortext[mycorrespondingauthor]{Corresponding author}
\ead{denglei@dgut.edu.cn}

\author[mysecondaryaddress]{Qiulin Lin}
\ead{lq016@ie.cuhk.edu.hk}

\address[mymainaddress]{School of Electrical Engineering \& Intelligentization, Dongguan University of Technology,
No.~1, Daxue Road, Songshan Lake, Dongguan 523808, China}
\address[mysecondaryaddress]{Department of Information Engineering, The Chinese University of Hong Kong, Shatin, New Territories, Hong Kong, China}

\begin{abstract}
Denote $\mathcal{A}$ as the set of all doubly substochastic $m \times n$ matrices and let $k$ be a positive integer.
Let $\mathcal{A}_k$ be the set of all $1/k$-bounded doubly substochastic $m \times n$ matrices, i.e., $\mathcal{A}_k \triangleq \{E \in \mathcal{A}: e_{i,j} \in [0, 1/k], \forall i=1,2,\cdots,m, j = 1,2,\cdots, n\}$.
Denote $\mathcal{B}_k$ as the set of all matrices in $\mathcal{A}_k$ whose entries are either $0$ or $1/k$.
We prove that $\mathcal{A}_k$ is the convex hull of all matrices in $\mathcal{B}_k$.
\end{abstract}

\begin{keyword}
Doubly substochastic matrix \sep Convex hull \sep Extreme points
%\MSC[2010] 05C70 \sep   15A23 \sep 15A24 \sep 15B51
\end{keyword}

\end{frontmatter}

%\linenumbers

\section{Introduction}
An $n \times n$ matrix $E=(e_{i,j})$ is \emph{doubly stochastic} if it satisfies the following conditions:
%\vspace{\lineskip}
%\vspace{0.2cm}
\be
\left\{
  \begin{array}{ll}
    e_{i,j} \ge 0, & \hbox{$\forall i,j=1,2,\cdots, n$;} \\
    \sum_{j=1}^{n} e_{i,j} = 1, & \hbox{$\forall i = 1,2,\cdots,n$;} \\
    \sum_{i=1}^{n} e_{i,j} = 1, & \hbox{$\forall j = 1,2,\cdots,n$.}
  \end{array}
\right.
\nnb
\ee
Birkhoff in \cite{birkhoff} shows that the set of all $n \times n$ doubly stochastic matrix is the convex hull
of all $n \times n$ permutation matrices. Recall that a permutation matrix is
a square $(0,1)$ matrix of which any line (row or column) has exactly one 1.

An $m \times n$ matrix  $E=(e_{i,j})$ is \emph{doubly substochastic} if it satisfies the following conditions:
%\vspace{\lineskip}
%\vspace{0.2cm}
\be
\left\{
  \begin{array}{ll}
    e_{i,j} \ge 0, & \hbox{$\forall i = 1,2,\cdots,m , j = 1,2,\cdots,n$;} \\
    \sum_{j=1}^{n} e_{i,j} \le 1, & \hbox{$\forall i = 1,2,\cdots,n$;} \\
    \sum_{i=1}^{n} e_{i,j} \le 1, & \hbox{$\forall j = 1,2,\cdots,m$.}
  \end{array}
\right.
\nnb
\ee
Mirsky in \cite{mirsky1959convex} shows that the set of all $n \times n$ doubly substochastic matrix is the convex hull
of all $n \times n$ subpermutation matrices. Recall that a subpermutation
matrix is a $(0,1)$ matrix of which any line (row or column) has at most one 1.
It is straightforward to extend Mirsky's result to general $m \times n$ matrices
because we can always augment a rectangular doubly substochastic matrix to a square doubly substochastic matrix by adding some zero lines.

Each entry in doubly stochastic/substochastic matrices  can be any real number in the interval $[0,1]$.
However, if we impose an upper bound $1/k$ where $k$ is a positive integer $k$ for all entries of  doubly stochastic/substochastic matrices,
the convex-hull characterization  becomes different.
Watkins and Merris in \cite{watkins1974convex} prove that the set of all ${1}/{k}$-bounded doubly stochastic matrices
is the convex hull of all doubly stochastic matrices whose entries are either 0 or $1/k$.
In this work, we obtain the counterpart result of \cite{watkins1974convex} for doubly substochastic matrices.
Specifically, we prove that the set of all ${1}/{k}$-bounded doubly substochastic matrices
is the convex hull of all doubly substochastic matrices whose entries are either 0 or $1/k$.

\section{Our Result}
Denote $\mathcal{A}$ as the set of all doubly substochastic $m \times n$ matrices and let $k$ be a positive integer.
Define the set of all $1/k$-bounded doubly substochastic $m \times n$ matrices as $\mathcal{A}_k$, i.e.,
\[
\mathcal{A}_k \triangleq \{E \in \mathcal{A}: e_{i,j} \in [0, 1/k], \forall i=1,2,\cdots, m, j=1,2, \cdots, n\}.
\]
Denote $\mathcal{B}_k$ as the set of all matrices in $\mathcal{A}_k$ whose entries are either $0$ or $1/k$.
We now present our result.

\begin{theorem} \label{thm:main-result}
$\mathcal{A}_k$ is the convex hull of all matrices in $\mathcal{B}_k$.
\end{theorem}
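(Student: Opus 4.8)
The plan is to realise $\mathcal{A}_k$ as a polytope and to show that all of its extreme points lie in $\mathcal{B}_k$. One inclusion is immediate: $\mathcal{B}_k\subseteq\mathcal{A}_k$, and $\mathcal{A}_k=\mathcal{A}\cap[0,1/k]^{mn}$ is convex (an intersection of convex sets), so $\mathrm{conv}(\mathcal{B}_k)\subseteq\mathcal{A}_k$. Since $\mathcal{A}_k$ is moreover bounded and cut out by finitely many linear inequalities, it is a polytope, hence the convex hull of its extreme points. It therefore suffices to prove that every extreme point $E$ of $\mathcal{A}_k$ has all entries in $\{0,1/k\}$.

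Suppose not: some extreme point $E$ has a \emph{free} entry $e_{i,j}\in(0,1/k)$. Call a row (resp.\ column) \emph{tight} if its entries sum to exactly $1$, and \emph{slack} otherwise. Form the bipartite graph $G$ whose vertices are the rows and columns that contain at least one free entry, with an edge joining row $i'$ to column $j'$ whenever $e_{i',j'}$ is free; by assumption $G$ has at least one edge. The core of the argument is to construct a nonzero real matrix $\Delta=(\Delta_{i',j'})$ supported on the free positions with $\sum_{j'}\Delta_{i',j'}=0$ for every tight row $i'$ and $\sum_{i'}\Delta_{i',j'}=0$ for every tight column $j'$. Granting this, for all sufficiently small $\epsilon>0$ both $E+\epsilon\Delta$ and $E-\epsilon\Delta$ lie in $\mathcal{A}_k$: non-free entries are untouched; free entries stay strictly inside $[0,1/k]$; tight line sums are preserved exactly; and slack line sums move by $O(\epsilon)$, hence stay below $1$. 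Then $E=\tfrac12(E+\epsilon\Delta)+\tfrac12(E-\epsilon\Delta)$ exhibits $E$ as a proper convex combination of two distinct points of $\mathcal{A}_k$, contradicting extremality.

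To build $\Delta$ I distinguish two cases on $G$. If $G$ contains a cycle, then, being bipartite, the cycle has even length; assign $\Delta=\pm1$ alternately to its edges and $0$ to all other free positions. Every vertex on the cycle is incident to exactly two cycle-edges carrying opposite signs, so all the required line-sum conditions hold. If instead $G$ is a forest, pick a connected component $T$ containing an edge. The key observation --- and the only point where $k\in\mathbb{Z}$ is used --- is that a tight line cannot contain exactly one free entry: if a tight row had one free entry and $p$ further entries equal to $1/k$ (the rest zero), the free entry would equal $1-p/k=(k-p)/k$, which lies in $(0,1/k)$ only when $p<k<p+1$, impossible for integer $k$. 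Hence every leaf of $T$ (a vertex of degree one, i.e.\ a line with exactly one free entry) is necessarily a slack line. A tree with at least one edge has at least two leaves; choosing two of them and assigning $\Delta=\pm1$ alternately along the unique path between them, and $0$ elsewhere, gives a nonzero $\Delta$: the interior path-vertices receive canceling contributions, while the two endpoints --- being slack --- carry no constraint at all.

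The routine parts are the polytope/extreme-point reduction and the feasibility check for $E\pm\epsilon\Delta$; the step I expect to require the most care is the leaf/integrality observation, since this is precisely the phenomenon that makes the $1/k$-bounded hull strictly smaller than $\mathcal{A}$ and forces the extreme points to be $\{0,1/k\}$-valued. An equivalent formulation augments $G$ with a single extra vertex adjacent to every slack line and argues that this graph must contain a cycle as soon as a free entry exists; the leaf argument above seems the most economical route.
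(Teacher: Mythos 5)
Your proposal is correct and is essentially the paper's own argument in graph-theoretic dress: both reduce the theorem to showing every extreme point of $\mathcal{A}_k$ is $\{0,1/k\}$-valued, and both then write $E=\tfrac12(E+\epsilon\Delta)+\tfrac12(E-\epsilon\Delta)$ with $\Delta$ alternating $\pm1$ along either a cycle of middle (``free'') entries or a path whose two end lines each contain exactly one middle entry, using the integrality of $k$ to see such lines are slack. Your cycle/forest dichotomy with the explicit leaf-integrality lemma matches the paper's Case I/Case II chain-chasing, and in fact spells out the step the paper only asserts, namely that a line with a single middle entry has sum strictly less than $1$.
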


\begin{proof}
It is straightforward to show that $\mathcal{A}_k$ is a non-empty compact convex set in $\mathbb{R}^{m\times n}$ and
any matrix in $\mathcal{B}_k$ is an extreme point of set $\mathcal{A}_k$.
Since any non-empty compact convex set in $\mathbb{R}^{n}$ is the convex hull of its extreme points \cite[Theorem 2.23]{giaquinta2011mathematical},
to finish the proof, we thus only need to show that any matrix $E \in \mathcal{A}_k \setminus \mathcal{B}_k$ is
not an extreme point of set $\mathcal{A}_k$.

An entry being neither 0 nor $1/k$ is called a \emph{middle} entry, and a row (resp. column) containing at least one middle entry is called a
\emph{middle} row (resp. column). A middle line is either a middle row or a middle column.
For any matrix $E \in \mathcal{A}_k \setminus \mathcal{B}_k$, it has at least one middle entry.  We consider two cases.

\emph{Case I: Any middle line has at least two middle entries.}
We aim at finding a chain of middle entries of matrix $E$.
We begin with any middle entry, denoted as $(r_1, c_1)$. Then row $r_1$ is a middle row which has at least two middle entries.
Hence, we can find a new middle entry, denoted as $(r_1, c_2)$.
Similarly, column $c_2$ is a middle column which has at least two middle entries.
Thus we can find another new middle entry $(r_2, c_2)$. Then we can find another new middle entry $(r_2, c_3)$ in middle row $r_2$.
If $c_3 = c_1$, we find the (latest-visited) old middle entry $(r_3,c_3)=(r_1,c_1)$ and terminate;
otherwise, we can find another new middle entry $(r_3, c_3)$. The process continues by alternating a middle row and
a middle column. In each new middle line, if there are old middle entries, we find the latest-visited old middle entry and terminate; otherwise,
we find a new middle entry and continue.
%We continue if finding a new middle entry but terminate if finding an old middle entry.
Since $E$ only has  a finite number
of middle entries, the process must terminate by finding an old middle entry, terminating either (i) at a middle entry $(r_{t'}, c_{t'})$
which coincides with an old middle entry $(r_t, c_t)$ or (ii) at a middle entry $(r_{t'}, c_{t'+1})$ which coincides with an old middle entry $(r_t, c_{t+1})$ for some positive integers $t$ and $t'$ such that $t < t'-1$.
Let us consider terminating-condition (i). The proof for terminating-condition (ii) is similar. In terminating-condition (i), we can find a loop of middle entries:
\[
(r_t, c_t) \to (r_t, c_{t+1}) \to (r_{t+1}, c_{t+1}) \to \cdots \to (r_{t'-1}, c_{t'}) \to  (r_{t'}, c_{t'})=(r_t, c_t),
\]
which has in total $2(t'-t)$ different middle entries.
Now we construct the following two matrices $E'$ and $E''$, both of which have the same entries of matrix $E$ except
\be
\left\{
  \begin{array}{ll}
    E'_{r_i, c_i} = E_{r_i, c_i} + \epsilon, E''_{r_i, c_i} = E_{r_i, c_i} - \epsilon, & \hbox{$ \forall i = t, t+1, \cdots, t'-1$;} \\
    E'_{r_i, c_{i+1}} = E_{r_i, c_{i+1}} - \epsilon, E''_{r_i, c_{i+1}} = E_{r_i, c_{i+1}} + \epsilon, & \hbox{$ \forall i = t, t+1, \cdots, t'-1$.}
  \end{array}
\right.
\nnb
\end{equation}

Under such construction, any row/column sum of $E'$ and $E''$ is equal to that of $E$.
We can always find a sufficiently small positive real number $\epsilon$ such that
all entries in $E'$ and $E''$ are in the interval $[0,1/k]$, ensuring $E' \in \mathcal{A}_k$ and $E'' \in \mathcal{A}_k$.
Since we have $E = (E'+E'')/2$ and $E' \neq E''$, matrix $E$ is not an extreme point of set $\mathcal{A}_k$.

\emph{Case II: There exists at least one middle line containing only one middle entry.}
We again aim at finding a chain of middle entries.
We first find a middle line (a middle row or a middle column) which has only one middle entry.
Let us assume that this is a middle column, say column $c_1$. The proof for a middle row is similar.
Denote the only middle entry in this middle column as $(r_1, c_1)$.
Now in the row $r_1$ we try to find another new middle entry. If
we cannot find such a middle entry, we terminate; otherwise, we denote the new middle entry as $(r_1, c_2)$ and we continue.
Similar to \emph{Case I}, the process continues by alternating a middle row and a middle column.
In each new middle line, if there are old middle entries, we find the latest-visited old middle entry and terminate;
otherwise, we try to find a new middle entry: we continue if indeed we can find a new middle entry but terminate if we cannot find any new middle entry.
Since $E$ only has a finite number of middle entries, the process
must terminate either (i)  when we find an old middle entry or (ii) when we cannot find any new middle entry. If it terminates at condition (i), we can again find a loop of middle entries
and use the proof of \emph{Case I} to show that $E$ is not an extreme points. If it terminates at condition (ii), it either terminates
at a middle entry $(r_t, c_t)$ or a middle entry $(r_t, c_{t+1})$ for some $t \ge 1$. We prove for the case $(r_t, c_t)$. The other case $(r_t, c_{t+1})$ has a similar proof.
If the process terminates at entry $(r_t,c_t)$, we cannot find an (old or new) middle entry in row $r_t$, i.e., $r_t$ is a middle row with only one middle entry $(r_t,c_t)$.
We now have a chain of $(2t-1)$ different middle entries,
\[
(r_1, c_1) \to (r_1, c_2) \to (r_2, c_2) \to \cdots \to (r_{t-1}, c_{t}) \to  (r_{t}, c_{t}).
\]
Now we construct the following two matrices $E'$ and $E''$, both of which have the same entries of matrix $E$ except
\be
\left\{
  \begin{array}{ll}
    E'_{r_i, c_i} = E_{r_i, c_i} + \epsilon, E''_{r_i, c_i} = E_{r_i, c_i} - \epsilon, & \hbox{$ \forall i = 1,2,\cdots, t$;} \\
    E'_{r_i, c_{i+1}} = E_{r_i, c_{i+1}} - \epsilon, E''_{r_i, c_{i+1}} = E_{r_i, c_{i+1}} + \epsilon, & \hbox{$ \forall i =1, 2, \cdots, t-1$.}
  \end{array}
\right.
\nnb
\ee

Under such construction, any row/column sum of $E'$ and $E''$ is equal to that of $E$, except column $c_1$ and row $r_t$.
The column-$c_1$ (resp. row-$r_t$) sum in $E'$ is equal to the column-$c_1$ (resp. row-$r_t$) sum in $E$ plus $\epsilon$
and the column-$c_1$ (resp. row-$r_t$) sum in $E''$ is equal to the column-$c_1$ (resp. row-$r_t$) sum in $E$ minus $\epsilon$.
However, since column $c_1$ (resp. row $r_t$) has only one middle entry $(r_1, c_1)$ (resp. $(r_t, c_t)$) in $E$, the column-$c_1$ (resp. row-$r_t$) sum in $E$ must
be in the interval $(0,1)$. Thus, we can always find a sufficiently small positive real number $\epsilon$ such that
both the column-$c_1$ sum and the row-$r_t$ sum in $E'$ and $E''$ are still in the interval $(0,1)$
and all entries in  $E'$ and $E''$  are in the interval $[0,1/k]$, ensuring $E' \in \mathcal{A}_k$ and $E'' \in \mathcal{A}_k$.
Since again we have $E = (E'+E'')/2$ and $E' \neq E''$, matrix $E$ is not an extreme point of set $\mathcal{A}_k$.
\end{proof}

\section{Remarks}
Our proof is inspired by Watkins and Merris's work \cite{watkins1974convex}. In particular,
the proof of \emph{Case I} is similar to that in \cite{watkins1974convex} for doubly stochastic matrices. However,
the proof of \emph{Case II} is new for doubly substochastic matrices which could have middle lines with only one middle entry.
In fact, the key to proving Theorem~\ref{thm:main-result}
is to divide all possibilities into \emph{Case I} and \emph{Case II} with respect to middle lines.

%\section{Applications}

%\section{Conclusion}

\section*{References}

\bibliography{ref}

\end{document}